\pdfoutput=1
\documentclass[11pt,letterpaper]{article}

\usepackage[margin=1in]{geometry}
\usepackage{amsmath,amssymb,amsthm,mathtools}
\usepackage{booktabs}
\usepackage{tikz}
\usetikzlibrary{calc}
\usepackage[colorlinks=true,citecolor=blue,linkcolor=blue,urlcolor=blue]{hyperref}
\usepackage{cleveref}

\newtheorem{theorem}{Theorem}
\newtheorem{conjecture}[theorem]{Conjecture}
\newtheorem{proposition}[theorem]{Proposition}
\newtheorem{observation}[theorem]{Observation}
\newtheorem{corollary}[theorem]{Corollary}

\newtheorem{remark}[theorem]{Remark}

\DeclareMathOperator{\fix}{fix}
\newcommand{\E}{\mathbb{E}}

\newcommand{\bbZ}{\mathbb{Z}}
\newcommand{\cC}{\mathcal{C}}

\title{Counterexamples to an Extremal Conjecture for Random Cycle-Factors}

\author{
Rishikesh Gajjala\thanks{Supported by Center for Quantum and Topological Systems, NYUAD.}\\
New York University Abu Dhabi}

\date{}
\begin{document}
\maketitle

\begin{abstract}
Christoph, Dragani\'{c}, Gir\~{a}o, Hurley, Michel, and M\"{u}yesser conjectured
that, when \(d\mid n\), the expected number of cycles in a uniformly
random cycle-factor of a directed \(d\)-regular graph on \(n\) vertices
is uniquely maximised by the disjoint union of \(n/d\) copies of the
complete looped digraph \(K_d^\circ\), with value \((n/d)H_d\), in the
extended version of their FOCS 2025 paper.
We disprove this conjecture in the strongest possible range.  For every
\(d\ge 3\) and every multiple \(n=kd\) with \(k\ge 2\), we construct a
directed \(d\)-regular graph on \(n\) vertices whose uniformly random
cycle-factor has expected cycle count strictly larger than \(kH_d\).
We also show that the conjectured extremal picture is correct in degree
\(d=2\), giving a sharp dichotomy between degree two and all higher
degrees.
\end{abstract}
\section{Introduction}

A uniformly random permutation is one of the basic objects of discrete
probability.  It is a classical result that the expected number of
cycles in a uniformly random permutation of \(m\) points is the harmonic
number
$
        H_m:=\sum_{j=1}^m \frac1j
$ \cite{Cauchy1840}. 
Equivalently, if \(K_m^\circ\) denotes the complete looped digraph on
\(m\) vertices, then a uniformly random cycle-factor of \(K_m^\circ\)
is just a uniformly random permutation of \(m\) points, and hence has
expected cycle count \(H_m\).

A natural way to generalise random permutations is to restrict the
allowed images of each point.  Throughout this paper, directed graphs
are finite, may have loops and directed cycles of length two, but have
no parallel edges.  A loop contributes one to both the in-degree and the
out-degree.  A directed graph is \(d\)-regular if every vertex has
in-degree and out-degree exactly \(d\).  A cycle-factor of a directed
graph \(G\) is a permutation \(\sigma\) of \(V(G)\) such that
\(v\to \sigma(v)\) is an edge of \(G\) for every \(v\in V(G)\).  We write
\(\mathcal C(G)\) for the set of cycle-factors of \(G\), and \(c(\sigma)\)
for the number of directed cycles of the permutation \(\sigma\).

This problem is also naturally connected to permanents and perfect
matchings.  Given a directed graph \(G\), form its bipartite double-cover
\(B_G\) with left and right copies of \(V(G)\), and with an edge
\(u_Lv_R\) whenever \(u\to v\) is an edge of \(G\).  Then cycle-factors
of \(G\) are in bijection with perfect matchings of \(B_G\).  Thus
\(|\mathcal C(G)|\) is the permanent of the bipartite adjacency matrix of
\(B_G\).  Classical permanent inequalities give strong evidence that
block constructions should play an extremal role here: the
Brégman--Minc theorem gives sharp upper bounds for such permanents, while
Schrijver's theorem gives sharp lower bounds for the number of perfect
matchings in regular bipartite graphs~\cite{Bregman73,Schrijver98}.
The entropy argument of
Christoph, Draganić, Girão, Hurley, Michel, and Müyesser~\cite{CDGHMM} is closely related to this.

Christoph et al.~\cite{CDGHMM} proved that a uniformly random
cycle-factor of any directed \(d\)-regular graph on \(n\) vertices has
\(O((n\log d)/d)\) cycles in expectation, generalising the classical
fact that a random permutation has logarithmically many cycles.  Their
proof uses entropy to exploit the fact that the upper and lower bounds for permanent for regular bipartite graphs are very close.  They conjectured the following
sharp extremal form of their theorem.

\begin{conjecture}[{\cite[Extended version, Conjecture~4.1]{CDGHMM}}]
\label{conj:CDGHMM}
If \(d\mid n\), then the expected number of cycles in a uniformly random
cycle-factor of a directed \(d\)-regular graph on \(n\) vertices is
uniquely maximised by the disjoint union of \(n/d\) copies of
\(K_d^\circ\).
\end{conjecture}

The value in Conjecture~\ref{conj:CDGHMM} would be \((n/d)H_d\), since
the random cycle-factor splits independently over the \(n/d\) components.
This makes the conjecture very natural: the proposed extremal graph is
the direct analogue of the complete graph for random permutations, and
it is also the clique-type construction suggested by the permanent and
path-partition viewpoints.  The case \(n=d\) is trivial, since a directed
\(d\)-regular graph on \(d\) vertices must be \(K_d^\circ\).  Thus the
first possible range for counterexamples is \(n\ge 2d\).

Cycle-factors are also closely related to path partitions: after
removing one edge from each cycle, one obtains a partition of the
vertices into directed paths.  This is one reason that cycle-factors
appear in work on path partitions and short tours in regular graphs.
Magnant and Martin~\cite{MM09} conjectured that every \(n\)-vertex
\(d\)-regular undirected graph has a path partition with at most
\(n/(d+1)\) paths; this would be tight for the disjoint union of
\(n/(d+1)\) copies of \(K_{d+1}\).  Their conjecture is known for
\(d\le 6\)~\cite{MM09,FF22} and for \(d=\Omega(n)\)~\cite{GL21}.
Related work of Vishnoi~\cite{Vishnoi12} and Feige, Ravi, and
Singh~\cite{FRS14} studies short tours in regular graphs, where
cycle-factors and path partitions provide natural starting points.

In this light, Conjecture~\ref{conj:CDGHMM} proposed a random
cycle-factor analogue of the same clique-type extremal phenomenon. Our
main result shows that this is not the case: although the
disjoint union of complete looped digraphs is the natural candidate, it
does not maximise the expected number of cycles as soon as \(d\ge 3\).


\begin{theorem}\label{thm:main}
For every integer \(d\ge 3\) and every integer \(k\ge 2\), there is a
directed \(d\)-regular graph \(G_{k,d}\) on \(kd\) vertices such that,
for a uniformly random \(\sigma\in \mathcal C(G_{k,d})\),
\[
        \mathbb E c(\sigma) > kH_d .
\]
Consequently, Conjecture~\ref{conj:CDGHMM} fails for every \(d\ge 3\)
and every multiple \(n=kd\ge 2d\).
\end{theorem}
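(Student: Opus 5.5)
The plan is to reduce to \(k=2\), construct \(G_{2,d}\) explicitly by rewiring two copies of \(K_d^\circ\), and then verify the inequality by direct counting.

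\textbf{Reduction to \(k=2\).} The number of cycles is additive over connected components, and a uniformly random cycle-factor of a disjoint union is a product of independent uniformly random cycle-factors of the components. So if \(G_{2,d}\) is a directed \(d\)-regular graph on \(2d\) vertices with \(\E c(\sigma)>2H_d\), then
\[
G_{k,d}:=G_{2,d}\sqcup (k-2)K_d^\circ
\]
has expected cycle count \(\E_{G_{2,d}}c+(k-2)H_d>kH_d\). This settles every \(k\ge 2\) at once, and the theorem reduces to the single case \(k=2\) for each \(d\ge 3\).

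\textbf{The construction, guided by \(d=3\).} For \(d=3\), take the looped undirected \(6\)-cycle, viewed as a digraph with arcs \(i\to i\) and \(i\to i\pm1\). Its cycle-factors are the matchings of \(C_6\) (fixed points plus transpositions) together with the two rotations. The matchings number \(1+6+9+2=18\) and contribute \(6+30+36+6=78\) cycles in total; the two rotations contribute one cycle each. The average is therefore \(80/20=4>11/3=2H_3\).

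This graph is two copies of \(K_3^\circ\) on \(A=\{0,1,2\}\) and \(B=\{3,4,5\}\), with the digon \(0\leftrightarrow2\) deleted from \(A\) and \(3\leftrightarrow5\) deleted from \(B\), and the crossing digons \(2\leftrightarrow3\) and \(5\leftrightarrow0\) added. I generalise this directly.
\begin{itemize}
\item Start from \(K_d^\circ\) on \(A\) and on \(B\).
\item Pick \(a_1,a_2\in A\) and \(b_1,b_2\in B\), and delete the digons \(a_1\leftrightarrow a_2\) and \(b_1\leftrightarrow b_2\).
\item Add the digons \(a_2\leftrightarrow b_1\) and \(a_1\leftrightarrow b_2\).
\end{itemize}
The result is \(d\)-regular, with no parallel edges, and keeps all loops.

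\textbf{Counting cycle-factors.} The crossing arcs form a cut, so flow conservation forces a cycle-factor to use equally many \(A\to B\) and \(B\to A\) arcs: zero, one, or two in each direction. I classify cycle-factors by which crossing arcs they use and reduce each class to counting permutations of \(A\) or \(B\) with prescribed images at \(a_1,a_2\) (or \(b_1,b_2\)) and avoiding the arcs \(a_1\to a_2\) and \(a_2\to a_1\). Each count, together with the total number of cycles summed over that class, is computable by inclusion--exclusion. The basic identity is that, over permutations of \(m\) points fixing a given point, the cycles sum to \((m-1)!(H_{m-1}+1)\); the analogous formulas for prescribed \(2\)-cycles and for a prescribed image \(\sigma(x)=y\) follow the same way. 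This should give \(\E c(\sigma)\) as an explicit expression \(P(d)+Q(d)H_d\) with \(P,Q\) rational functions of \(d\). I would then show \(\E c(\sigma)-2H_d>0\) for all \(d\ge3\) by clearing denominators and checking the sign asymptotically and at small \(d\).

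\textbf{Main obstacle.} The difficulty is that the gain is small: it is of order \(1/d^2\) relative to terms of size \(1/d\). My first attempt, swapping only a pair of loops for a crossing digon, already fails, since it loses about \(2/d\) and gains only about \(1/d^2\). So the bookkeeping must be exact, and the digon rewiring might also fail for large \(d\). If it does, my fallback is a looped circulant on \(\bbZ_{2d}\) with connection set an interval containing \(0\), which is exactly the successful \(d=3\) example. I would compute its expectation by a transfer-matrix count, or verify the inequality for small \(d\) and prove it asymptotically.
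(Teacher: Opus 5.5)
Your graph is exactly the paper's $X_d$: your $A,B$ are $D_1,D_2$, with $a_1=B_1$, $a_2=A_2$, $b_1=B_2$, $b_2=A_1$. Your reduction to $k=2$ is the paper's Corollary~\ref{cor:all-k}, and classifying cycle-factors by the crossing arcs they use, then counting each class by inclusion--exclusion over prescribed partial permutations, is precisely the paper's method.

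\textbf{The gap.} You stop before the only substantive step. You never compute the class sizes and cycle sums, so the inequality $\E c(\sigma)>2H_d$, which is the whole content of the theorem, is only something the computation ``should'' give. You also explicitly leave open that it might fail for large $d$. As you note yourself, the excess is of order $1/d^2$, so nothing short of the exact count settles it. As written, this is a correct plan rather than a proof.

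\textbf{What the count gives.} There are six admissible crossing patterns.
\begin{enumerate}
\item No crossing arc: $N_0^2$ factors, with $N_0=(d-2)!(d^2-3d+3)$ and per-block cycle sum $S_0=d!H_d-2(d-1)!H_{d-1}+(d-2)!(H_{d-2}+1)$.
\item Each single crossing digon: $((d-1)!)^2$ factors, mean $2H_{d-1}+1$.
\item Both digons: $((d-2)!)^2$ factors, mean $2H_{d-2}+2$.
\item The two splicing patterns ($a_2\to b_1$ with $b_2\to a_1$, and the mirror pair): $((d-2)!(d-2))^2$ factors each, mean $2H_{d-2}-1$.
\end{enumerate}
The splicing case is the one your description does not cover. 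Each block then contains cycles plus one open path (in $A$, from $a_1$ to $a_2$, which cannot be the missing arc). To count it you must \emph{close} the path with the virtual arc $a_2\to a_1$, rather than avoid that arc. Equivalently, identify $a_1,a_2$ into one point that may not be fixed. Each block then has mean $H_{d-2}-1$ closed cycles, and the two paths splice into one more cycle, which gives the mean $2H_{d-2}-1$.

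Every class mean is $2H_d$ plus a rational function of $d$; in your notation $Q\equiv 2$. So the harmonic terms cancel exactly, and
\[
\E c(\sigma)-2H_d=\frac{2(d-2)(3d^3-14d^2+25d-10)}{d(d-1)(d^4-6d^3+19d^2-30d+20)} .
\]
Here the quartic equals $N/((d-2)!)^2>0$. The cubic $f$ has $f(3)=20$ and $f'(d)=9d^2-28d+25>0$, so $f>0$ for all $d\ge 3$. Hence the rewiring works for every $d\ge 3$, with excess $\sim 6/d^2$. No asymptotic-plus-small-cases sign check is needed, and your circulant fallback is unnecessary.
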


We also show that this failure starts at the first possible degree:
in degree two, the conjectured extremal picture is correct.

\begin{observation}\label{obs:d2}
If \(n\) is even, then the expected number of cycles in a uniformly
random cycle-factor of a directed \(2\)-regular graph on \(n\) vertices
is uniquely maximised, with value
\[
        \frac n2 H_2=\frac{3n}{4},
\]
by the disjoint union of \(n/2\) copies of \(K_2^\circ\).
\end{observation}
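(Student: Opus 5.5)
The plan is to write the expected cycle count as a sum of local contributions, one per vertex, and bound each contribution by \(3/4\) using the very rigid structure of cycle-factors in degree two. For a permutation \(\sigma\), let \(L_v(\sigma)\) be the length of the cycle of \(\sigma\) through \(v\). Then
\[
        c(\sigma)=\sum_{v\in V(G)} \frac{1}{L_v(\sigma)},
        \qquad\text{so}\qquad
        \E c(\sigma)=\sum_{v} \E\!\left[\frac{1}{L_v(\sigma)}\right].
\]
It then suffices to prove \(\E[1/L_v]\le 3/4\) for every vertex \(v\) of a directed \(2\)-regular graph \(G\), and to characterise when equality holds for all \(v\).

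\textbf{Step 1 (structure of \(\mathcal C(G)\)).} Use the bijection with perfect matchings of the bipartite double-cover \(B_G\) from the introduction. Since \(G\) is \(2\)-regular, \(B_G\) is a \(2\)-regular bipartite graph, hence a disjoint union of even cycles. Each such cycle has exactly two perfect matchings, consisting of its two alternating edge classes. Therefore a uniformly random cycle-factor is obtained by choosing, independently and uniformly for each cycle of \(B_G\), one of its two alternating matchings. In particular, every edge \(u\to w\) of \(G\) is used by \(\sigma\) with probability exactly \(1/2\). (We assume \(\mathcal C(G)\neq\emptyset\), which holds since \(B_G\) is regular bipartite.)

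\textbf{Step 2 (local bound).} If \(v\) has no loop, then \(L_v\ge 2\) always, so \(\E[1/L_v]\le 1/2<3/4\). If \(v\) has a loop, then by Step 1 we have \(\sigma(v)=v\) with probability \(1/2\), giving \(1/L_v=1\). Otherwise \(L_v\ge 2\), giving \(1/L_v\le 1/2\). Hence \(\E[1/L_v]\le \tfrac12+\tfrac14=\tfrac34\). Summing over \(v\) gives \(\E c(\sigma)\le 3n/4\). For \(n\) even, the disjoint union of \(n/2\) copies of \(K_2^\circ\) attains this value, since its cycle-factors split independently and each \(K_2^\circ\) contributes \(H_2=3/2\).

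\textbf{Step 3 (uniqueness).} This is the only step requiring care. Equality forces two things: every vertex carries a loop, and \(L_v=2\) almost surely on the event \(\sigma(v)\ne v\). If every vertex is looped, then the non-loop edges form a fixed-point-free permutation \(\pi\). Tracing \(B_G\) from a loop edge \(v_Lv_R\), we pass to \(v_L\pi(v)_R\), then to the loop edge \(\pi(v)_R\pi(v)_L\), and so on. Thus each cycle of \(B_G\) corresponds to a cycle of \(\pi\), and its two matchings are ``all loops'' and ``\(\sigma=\pi\) on that cycle''. Hence, on \(\sigma(v)\neq v\), \(L_v\) equals the length of the \(\pi\)-cycle through \(v\). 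Equality therefore forces every cycle of \(\pi\) to have length \(2\), i.e.\ \(G\) is a disjoint union of copies of \(K_2^\circ\). I expect this structural identification of \(B_G\) in the all-looped case to be the main point to verify carefully, but it appears straightforward.
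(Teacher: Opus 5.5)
Your proposal is correct and follows essentially the same route as the paper: your per-vertex bound $\E[1/L_v]\le \Pr(\sigma(v)=v)+\tfrac12\Pr(\sigma(v)\ne v)$ is the localized form of the paper's pointwise inequality $c(\pi)\le (n+\fix(\pi))/2$, combined with the same fact from the double cover that each edge is used with probability $1/2$. For uniqueness, the paper only notes that the cycle-factor using one long cycle of the non-loop permutation, with loops everywhere else, has positive probability; your explicit description of the components of $B_G$ in the all-looped case is correct but slightly more than is needed.
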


Thus the conjecture exhibits a sharp dichotomy: it is true in the only
nontrivial degree below three, but false for every \(d\ge 3\) and every
admissible order \(n\ge 2d\).

Section~\ref{sec:d2} proves Observation~\ref{obs:d2}.  Section~\ref{sec:six}
presents the smallest counterexample, the looped bidirected \(6\)-cycle.
Section~\ref{sec:construction} gives a general counterexample to all
\(d\ge 3\) and proves Theorem~\ref{thm:main}.

We discuss the undirected analogue of Conjecture \ref{conj:CDGHMM} in Appendix \ref{undirected}. We briefly describe the experimental process that led to the counterexamples in this paper in Appendix \ref{appendixAI}. The final constructions and proofs in the main body are self-contained.

\section{\texorpdfstring{The case \(d=2\)}{The case d=2}}\label{sec:d2}

For a directed graph \(G\), let \(\lambda(G)\) denote the number of loops
of \(G\).  For a permutation \(\sigma\), let
\[
        \fix(\sigma):=\bigl|\{v\in V(G):\sigma(v)=v\}\bigr|
\]
be the number of fixed points of \(\sigma\).

\begin{theorem}\label{thm:d2}
Let \(G\) be a directed \(2\)-regular graph on \(n\) vertices, and let
\(\sigma\) be uniformly distributed in \(\mathcal C(G)\).  Then the
following hold.
\begin{enumerate}
\renewcommand{\labelenumi}{\textup{(\arabic{enumi})}}
    \item Every directed edge of \(G\) belongs to \(\sigma\) with probability \(1/2\).
    \item \(\E \fix(\sigma)=\lambda(G)/2\le n/2\).
    \item
    \[
        \E c(\sigma)\le \frac n2+\frac{\lambda(G)}4\le \frac{3n}{4}.
    \]
    \item Equality in the bound \(\E c(\sigma)\le 3n/4\) holds if and
    only if \(n\) is even and
    \[
        G\cong \bigsqcup_{i=1}^{n/2} K_2^\circ .
    \]
\end{enumerate}
\end{theorem}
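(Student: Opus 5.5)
The plan is to study the bipartite double cover \(B_G\) from the introduction, since cycle-factors of \(G\) are exactly perfect matchings of \(B_G\). Because \(G\) is \(2\)-regular, \(B_G\) is a \(2\)-regular bipartite graph. It is simple, since \(G\) has no parallel edges. Hence \(B_G\) is a disjoint union of even cycles \(Z_1,\dots,Z_r\), each of length at least \(4\). An even cycle has exactly two perfect matchings, and they are complementary. So a perfect matching of \(B_G\) is an independent choice of one of the two matchings on each \(Z_i\), and \(|\mathcal C(G)|=2^r\). Under the uniform distribution, each edge \(u_Lv_R\) lies in the chosen matching of its component with probability exactly \(1/2\). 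This proves (1).

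Part (2) then follows by linearity of expectation. We have \(\fix(\sigma)=\sum_{v}\mathbf 1[v\to v\in\sigma]\), and only looped vertices contribute, each with probability \(1/2\) by (1). Since each vertex carries at most one loop, \(\lambda(G)\le n\). For (3), I would use the deterministic bound \(c(\sigma)\le \fix(\sigma)+\tfrac12\bigl(n-\fix(\sigma)\bigr)\). This holds because every non-trivial cycle of \(\sigma\) has length at least \(2\). Taking expectations and applying (2) gives \(\E c(\sigma)\le n/2+\lambda(G)/4\le 3n/4\).

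For (4), equality in the chain requires two things. First, \(\lambda(G)=n\), so every vertex has a loop. Second, almost surely every non-fixed cycle of \(\sigma\) has length exactly \(2\). If every vertex has a loop, then \(2\)-regularity gives each vertex \(v\) exactly one out-neighbour \(f(v)\ne v\) and exactly one in-neighbour other than itself. So \(f\) is a fixed-point-free permutation of \(V(G)\), and \(G\) is the union of the loops with the functional digraph of \(f\).

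The main step is identifying the components of \(B_G\) in this case. For a cycle \((v_1\,v_2\cdots v_m)\) of \(f\) with \(m\ge 2\), the vertices \(v_{iL},v_{iR}\) form a single \(2m\)-cycle in \(B_G\): the edges \(v_{iL}v_{iR}\) alternate with the edges \(v_{iL}v_{(i+1)R}\). Its two perfect matchings correspond to "all loops on this block" and "follow \(f\) on this block". The second choice occurs with probability \(1/2>0\) and produces a cycle of \(\sigma\) of length \(m\). Therefore equality forces \(m=2\) for every cycle of \(f\). This means \(n\) is even and \(G\cong\bigsqcup_{i=1}^{n/2}K_2^\circ\).

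For the converse, each copy of \(K_2^\circ\) independently contributes either \(2\) cycles or \(1\) cycle, each with probability \(1/2\). This gives \(\E c(\sigma)=\tfrac n2\cdot\tfrac32=3n/4\), which is the stated maximum, and Observation~\ref{obs:d2} follows.
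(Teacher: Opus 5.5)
Your proposal is correct and follows the paper's proof essentially step for step. The bipartite double cover splits into even cycles, which gives (1); linearity gives (2); the pointwise bound $c(\pi)\le (n+\fix(\pi))/2$ gives (3); and in the equality case you force $\lambda(G)=n$, then rule out cycles of length $\ge 3$ in the non-loop permutation by exhibiting a positive-probability cycle-factor on which the pointwise bound is strict. Your explicit identification of the $2m$-cycle components of $B_G$ is only a more detailed justification of the paper's claim that the cycle-factor following one such cycle and using loops elsewhere has positive probability.
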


\begin{proof}
Let $B$ be the bipartite double-cover of $G$.  Its two vertex classes are left and right copies $V_L,V_R$ of $V(G)$, and for every directed edge $u\to v$ of $G$ we put an edge $u_L v_R$ in $B$.  Cycle-factors of $G$ are in natural bijection with perfect matchings of $B$.

Since $G$ is directed $2$-regular, the bipartite graph $B$ is $2$-regular.  Every connected component of a finite $2$-regular graph is a cycle, and every cycle in a bipartite graph is even.  An even cycle has exactly two perfect matchings, and each edge of the cycle lies in exactly one of them.  Hence, under the uniform measure on perfect matchings of $B$, every edge of $B$ is present with probability $1/2$.  Translating back to $G$ proves the first assertion.

A fixed point of $\sigma$ is exactly a loop used by $\sigma$, so the first assertion gives
\[
  \E\fix(\sigma)=\frac{\lambda(G)}2\leq \frac n2.
\]
For any permutation $\pi$ on $n$ points,
\[
  c(\pi)\leq \fix(\pi)+\frac{n-\fix(\pi)}2=\frac{n+\fix(\pi)}2,
\]
because every nontrivial cycle has length at least two.  Taking expectations gives
\[
  \E c(\sigma)\leq \frac{n+\E\fix(\sigma)}2
  =\frac n2+\frac{\lambda(G)}4\leq \frac{3n}{4}.
\]

It remains to identify the equality case.  Suppose that $\E c(\sigma)=3n/4$.  Then all inequalities above are tight.  In particular, $\lambda(G)=n$, so every vertex has a loop.  Since $G$ is directed $2$-regular and has no parallel edges, each vertex has exactly one additional outgoing edge and exactly one additional incoming edge.  These additional edges form the digraph of a fixed-point-free permutation $P$ of $V(G)$.

If $P$ had a cycle of length at least three, then the cycle-factor using this entire $P$-cycle and using loops on all remaining vertices would have positive probability.  For this cycle-factor the pointwise inequality
\[
  c(\pi)\leq \frac{n+\fix(\pi)}2
\]
would be strict, contradicting equality in expectation.  Hence every cycle of $P$ has length two.  Thus $n$ is even, $P$ is a fixed-point-free involution, and $G$ is the disjoint union of $n/2$ copies of $K_2^\circ$.

Conversely, each copy of $K_2^\circ$ has exactly two cycle-factors: the identity, with two cycles, and the transposition, with one cycle.  The expected cycle count on one component is therefore $3/2$, and summing over $n/2$ components gives $3n/4$.
\end{proof}

Since \(H_2=3/2\), Theorem~\ref{thm:d2} proves
Observation~\ref{obs:d2} and verifies Conjecture~\ref{conj:CDGHMM} in
degree two.

\section{A six-vertex counterexample}\label{sec:six}

For $n\geq 4$, define the looped bidirected cycle $G_n^\circ$ on vertex set $\bbZ_n$ by
\[
  i\to i,
  \qquad i\to i+1,
  \qquad i\to i-1 \pmod n.
\]
This is a directed $3$-regular graph as in \Cref{fig:G6}.
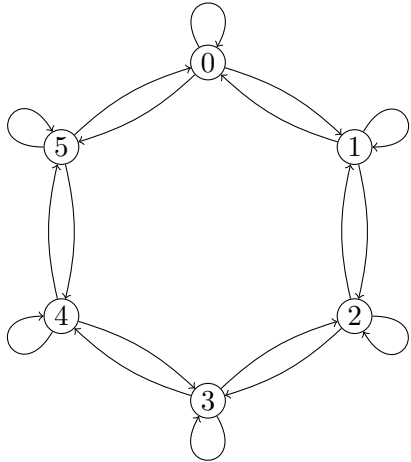
\begin{figure}[t]
\centering
\begin{tikzpicture}[
    scale=1.0,
    every node/.style={circle, draw, inner sep=1.5pt},
    edge/.style={->, bend left=13},
    loopedge/.style={->, looseness=9, min distance=9mm}
]
    \foreach \i in {0,...,5} {
        \node (v\i) at ({90-60*\i}:2.25) {\(\i\)};
    }

    \foreach \i/\j in {0/1,1/2,2/3,3/4,4/5,5/0} {
        \draw[edge] (v\i) to (v\j);
        \draw[edge] (v\j) to (v\i);
    }

    \draw[loopedge] (v0) to[out=120,in=60]   (v0);
    \draw[loopedge] (v1) to[out=60,in=0]     (v1);
    \draw[loopedge] (v2) to[out=0,in=-60]    (v2);
    \draw[loopedge] (v3) to[out=-60,in=-120] (v3);
    \draw[loopedge] (v4) to[out=-120,in=180] (v4);
    \draw[loopedge] (v5) to[out=180,in=120]  (v5);
\end{tikzpicture}
\caption{The looped bidirected \(6\)-cycle \(G_6^\circ\), the smallest
counterexample to Conjecture~\ref{conj:CDGHMM}.}
\label{fig:G6}
\end{figure}
\begin{proposition}\label{prop:cycle-classification}\label{prop:Gn}
Every cycle-factor of $G_n^\circ$ is one of the following:
\begin{enumerate}
  \item the forward Hamilton cycle $i\mapsto i+1$;
  \item the backward Hamilton cycle $i\mapsto i-1$;
  \item a product of adjacent transpositions and fixed points.
\end{enumerate}
Equivalently, apart from the two directed Hamilton cycles, cycle-factors of $G_n^\circ$ are in bijection with matchings of the undirected cycle $C_n$.
\end{proposition}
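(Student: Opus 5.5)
We would proceed by encoding a cycle-factor by its displacement function. For $\sigma\in\mathcal C(G_n^\circ)$ write $\sigma(i)=i+\delta(i)$ with $\delta(i)\in\{-1,0,+1\}$ (well defined since $n\ge 4$, so $+1\neq -1 \pmod n$). The whole proof then reduces to a local analysis of consecutive values of $\delta$, using only that $\sigma$ is a bijection.

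\textbf{Key step (local constraint).} Suppose $\delta(i)=+1$, so $\sigma(i)=i+1$. Then $\sigma(i+1)\neq i+1$, since $i+1$ already has the preimage $i$ and injectivity forbids a second one. Hence $\delta(i+1)\in\{+1,-1\}$. Symmetrically, if $\delta(i)=-1$ then $\delta(i-1)\in\{+1,-1\}$.

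\textbf{Case 1: some $i$ has $\delta(i)=+1$ and $\delta(i+1)=-1$.} Then $\{i,i+1\}$ is a $2$-cycle of $\sigma$, that is, an adjacent transposition.

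\textbf{Case 2: some $i$ has $\delta(i)=+1$ and $\delta(i+1)=+1$.} We claim $\delta(i+2)=+1$, and then by induction around $\bbZ_n$ that $\delta\equiv+1$, giving the forward Hamilton cycle.
\begin{itemize}
\item By the key step, $\delta(i+2)\in\{\pm1\}$.
\item If $\delta(i+2)=-1$, then $\sigma(i+2)=i+1=\sigma(i)$, contradicting injectivity.
\end{itemize}
So $\delta(i+2)=+1$. The same argument applies to any consecutive pair of $+1$'s, so the propagation continues all the way around.

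\textbf{Remaining configurations.} The analogous argument for $-1$ going leftward shows that two consecutive $-1$'s force the backward Hamilton cycle. Otherwise every $+1$ at $i$ is followed by a $-1$ at $i+1$, and these pair up into adjacent transpositions. Every $-1$ at $j$ is preceded, again by the key step, by $\delta(j-1)=\pm1$. It cannot be $-1$, since we are no longer in the backward Hamilton case, so it is $+1$, and the $-1$ belongs to one of those transpositions. All remaining vertices have $\delta=0$ and are fixed points. The transpositions are disjoint because each vertex has a single image. Hence $\sigma$ is a product of disjoint adjacent transpositions and fixed points.

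\textbf{Converse and bijection.} Each of the three listed types is a permutation using only edges of $G_n^\circ$, so each is a cycle-factor. Disjoint adjacent transpositions $\{i,i+1\}$ correspond exactly to edges of a matching in $C_n$. Distinct matchings give distinct permutations, and none of these equals a Hamilton cycle since $n\ge 4>2$. This yields the stated bijection.

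\textbf{Main obstacle.} The only delicate point is the propagation in Case 2: ruling out a $+1,+1$ run ending in $-1$ or $0$ without a Hamilton cycle. Injectivity handles $-1$ directly, and the key step handles $0$. We also need $n\ge 4$ so that the two Hamilton cycles differ from transpositions and the $\pm1$ displacements are distinct.
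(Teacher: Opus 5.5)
Your proof is correct and is essentially the paper's argument in displacement notation. The paper also shows that an unreciprocated forward edge ($\sigma(i)=i+1$, $\sigma(i+1)\neq i$) forces $\sigma(i+1)=i+2$ and then propagates around the cycle to the forward Hamilton cycle, with the symmetric argument for backward edges. That is exactly your key step combined with Case 2, except that you spell out the injectivity check $\sigma(i+2)\neq\sigma(i)$ that the paper leaves inside ``repeating the same argument.''
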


\begin{proof}
Let $\sigma\in \cC(G_n^\circ)$.  Suppose first that $\sigma(i)=i+1$ and $\sigma(i+1)\neq i$ for some $i$.  Since $\sigma$ is injective and $i+1$ is already used as an image, the value $\sigma(i+1)$ cannot be $i+1$.  Among the three allowed images $i,i+1,i+2$, the only remaining possibility is $i+2$.  Thus $\sigma(i+1)=i+2$.  Repeating the same argument around the cycle gives $\sigma(j)=j+1$ for every $j$, so $\sigma$ is the forward Hamilton cycle.

The same argument with the orientation reversed shows that if $\sigma(i)=i-1$ and $\sigma(i-1)\neq i$ for some $i$, then $\sigma$ is the backward Hamilton cycle.

Therefore, unless $\sigma$ is one of these two Hamilton cycles, every non-loop edge used by $\sigma$ is paired with its reverse.  Hence every vertex which is not a fixed point belongs to a directed $2$-cycle $(i\ i+1)$.  These $2$-cycles are pairwise disjoint, so they are exactly matchings of the underlying undirected cycle $C_n$.
\end{proof}

\begin{corollary}\label{cor:d3-counterexample}
Conjecture~\ref{conj:CDGHMM} is false for $d=3$ and $n=6$.
\end{corollary}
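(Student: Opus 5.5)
We apply Proposition~\ref{prop:Gn} with $n=6$. The graph $G_6^\circ$ is directed $3$-regular on $6$ vertices, and $3\mid 6$. So it suffices to show that its expected cycle count under the uniform measure on $\cC(G_6^\circ)$ is strictly larger than the conjectured maximum $(6/3)H_3=2\cdot\frac{11}{6}=\frac{11}{3}$.

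By Proposition~\ref{prop:Gn}, $\cC(G_6^\circ)$ consists of two parts.
\begin{enumerate}
\item The two directed Hamilton cycles, each having exactly one cycle.
\item One cycle-factor for each matching $M$ of the undirected cycle $C_6$. Here each edge $\{i,i+1\}$ of $M$ gives the transposition $(i\ i+1)$, and every unmatched vertex is a fixed point. Such a factor has $|M|$ two-cycles and $6-2|M|$ fixed points, hence exactly $6-|M|$ cycles.
\end{enumerate}

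It remains to count matchings of $C_6$ by size. We use the standard formula $\frac{n}{n-k}\binom{n-k}{k}$ for $k$-matchings of $C_n$, or simply count directly:
\begin{itemize}
\item one empty matching;
\item $6$ single edges;
\item $9$ matchings of size two, since there are $\binom62=15$ pairs of edges and $6$ of them are adjacent;
\item $2$ perfect matchings.
\end{itemize}
This gives $|\cC(G_6^\circ)|=2+1+6+9+2=20$.

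The total cycle count is
\[
  2\cdot 1+1\cdot 6+6\cdot 5+9\cdot 4+2\cdot 3=80,
\]
so the expectation is $\E c(\sigma)=80/20=4$. Since $4>\frac{11}{3}$, the graph $G_6^\circ$ beats $K_3^\circ\sqcup K_3^\circ$, and Conjecture~\ref{conj:CDGHMM} fails for $d=3$, $n=6$.

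The only real work was the classification in Proposition~\ref{prop:Gn}, which we take as given. The step most prone to error here is the count of the $2$-matchings of $C_6$ and their cycle numbers, so it is the one to double-check.
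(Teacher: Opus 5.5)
Your proposal is correct and matches the paper's proof step for step: you use Proposition~\ref{prop:Gn} to reduce to the two Hamilton cycles plus one cycle-factor per matching of $C_6$, count matchings of sizes $0,1,2,3$ as $1,6,9,2$, and obtain $\E c(\sigma)=80/20=4>11/3=2H_3$. Your justification of the count $9$ via $\binom62-6$ (or the closed formula) is a small addition the paper leaves implicit.
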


\begin{proof}
By Proposition~\ref{prop:Gn}, the cycle-factors of $G_6^\circ$ consist of the two directed Hamilton cycles together with the cycle-factors arising from matchings of $C_6$.  The numbers of matchings of $C_6$ of sizes $0,1,2,3$ are
$
  1,\ 6,\ 9,\ 2.
$
A matching of size $r$ gives a cycle-factor with $6-r$ cycles.  Hence
\[
  \E c(\sigma)=\frac{2\cdot 1+1\cdot 6+6\cdot 5+9\cdot 4+2\cdot 3}{20}=4.
\]
On the other hand, the expected cycle count of the conjectured extremal graph $K_3^\circ\sqcup K_3^\circ$ is
\[
  2H_3=2\left(1+\frac12+\frac13\right)=\frac{11}{3}<4.
\]
\end{proof}


\section{\texorpdfstring{Counterexamples for \(d\ge 3\)}{Counterexamples for d >= 3}}\label{sec:construction}

One might suspect that the six-vertex example is a small-degree accident,
and that the conjectured family might still be extremal for sufficiently large \(d,n\).
We show that this is not the case.  The construction below is a
degree-\(d\) analogue of the looped bidirected \(6\)-cycle: the six
vertices are replaced by six classes of sizes
\(1,1,d-2,1,1,d-2\), arranged cyclically as in \Cref{fig:Xd}.

\paragraph{Construction.}
Fix \(d\ge 3\).  Define a directed graph \(X_d\) as follows.  Its vertex
set is partitioned into six classes
\[
        A_1,B_1,C_1,A_2,B_2,C_2,
        \qquad |A_i|=|B_i|=1,\qquad |C_i|=d-2.
\]
Inside each class we put the complete looped digraph, and between
consecutive classes in the cyclic order
\[
        A_1,B_1,C_1,A_2,B_2,C_2,A_1
\]
we put all directed edges in both directions.  There are no other edges.
Thus each vertex sees exactly its own class and its two neighbouring
classes as both out-neighbours and in-neighbours, so \(X_d\) is directed
\(d\)-regular.

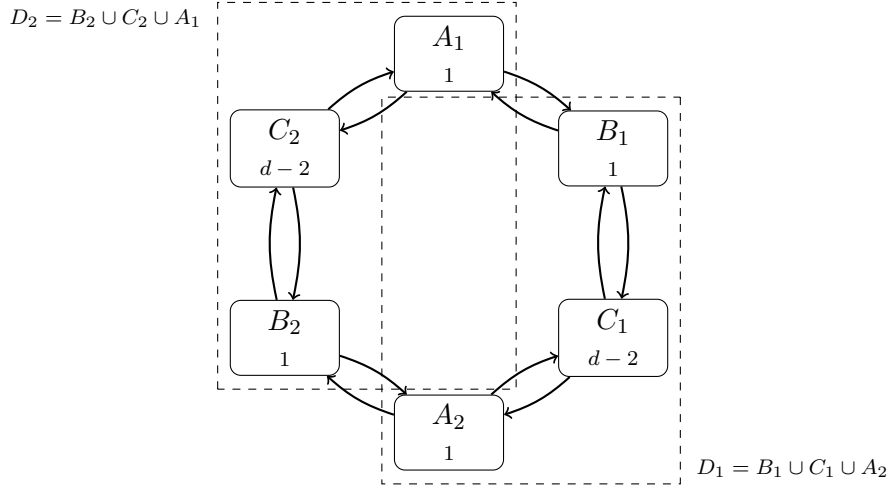
\begin{figure}[t]
\centering
\begin{tikzpicture}[
    scale=1.05,
    box/.style={draw, rounded corners, minimum width=1.45cm, minimum height=.75cm, align=center},
    arr/.style={->, thick, bend left=12}
]
    \node[box] (A1) at (90:2.4)  {\(A_1\)\\{\scriptsize \(1\)}};
    \node[box] (B1) at (30:2.4)  {\(B_1\)\\{\scriptsize \(1\)}};
    \node[box] (C1) at (-30:2.4) {\(C_1\)\\{\scriptsize \(d-2\)}};
    \node[box] (A2) at (-90:2.4) {\(A_2\)\\{\scriptsize \(1\)}};
    \node[box] (B2) at (-150:2.4){\(B_2\)\\{\scriptsize \(1\)}};
    \node[box] (C2) at (150:2.4) {\(C_2\)\\{\scriptsize \(d-2\)}};

    \foreach \X/\Y in {A1/B1,B1/C1,C1/A2,A2/B2,B2/C2,C2/A1} {
        \draw[arr] (\X) to (\Y);
        \draw[arr] (\Y) to (\X);
    }

    \draw[dashed] ($(B1)+(0.85,0.65)$) rectangle ($(A2)+(-0.85,-0.65)$);
    \draw[dashed] ($(B2)+(-0.85,-0.65)$) rectangle ($(A1)+(0.85,0.65)$);

    \node at (4.35,-2.85) {\scriptsize \(D_1=B_1\cup C_1\cup A_2\)};
    \node at (-4.35,2.85) {\scriptsize \(D_2=B_2\cup C_2\cup A_1\)};
\end{tikzpicture}
\caption{The construction \(X_d\).  Each class has all looped directed
edges internally, and consecutive classes in the cyclic order are joined
by all directed edges in both directions.}
\label{fig:Xd}
\end{figure}
A partial permutation is a set of directed edges with no two sharing a tail and no two sharing a head.
\begin{observation}\label{lem:partial-permutation}\label{obs:partial-perm}
Let $F$ be a partial permutation on $K_n^\circ$.  Suppose that $F$ has $r$ prescribed directed edges and $q$ prescribed directed cycle components.  Then
\[
  | \{\pi\in S_n:\pi\supseteq F\}|=(n-r)!,
\qquad
  \sum_{\pi\supseteq F} c(\pi)=(n-r)!\bigl(H_{n-r}+q\bigr).
\]
\end{observation}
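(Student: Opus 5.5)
The plan is to contract the prescribed structure of $F$ and reduce to the classical statement for uniformly random permutations. Decompose $F$ into its connected components. Since no two edges share a tail or a head, each component is either a directed cycle (there are $q$ of these) or a directed path, possibly a single edge. Let the path components be $P_1,\dots,P_s$. A component with $m$ vertices uses $m$ edges if it is a cycle and $m-1$ edges if it is a path. So the number of vertices covered by $F$ equals $r+s$, and there are $n-r-s$ vertices untouched by $F$.

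Next I will set up a bijection. Each path $P_j$ is contracted to a single ``super-vertex'', and each untouched vertex is kept as it is. This gives a ground set $W$ with $|W|=s+(n-r-s)=n-r$. Given $\pi\supseteq F$, define $\tau\in S_W$ as follows: for $w\in W$, follow $\pi$ from the end of $w$ (the last vertex of the path, or the vertex itself) and let $\tau(w)$ be the element of $W$ that this image begins. This is well defined, because every vertex not lying in a cycle of $F$ is the start of exactly one element of $W$. Conversely, since $K_n^\circ$ contains every edge and every loop, any $\tau\in S_W$ lifts to a unique $\pi\supseteq F$ by joining the end of $w$ to the start of $\tau(w)$.

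Under this bijection the cycles of $\pi$ are exactly the $q$ cycles of $F$, together with the cycles of $\tau$, where each cycle of $\tau$ expands into one cycle of $\pi$. Hence $c(\pi)=q+c(\tau)$. Counting gives $|\{\pi\supseteq F\}|=(n-r)!$. Summing, and using the classical fact recalled in the introduction that $\sum_{\tau\in S_m}c(\tau)=m!H_m$, gives $\sum_{\pi\supseteq F}c(\pi)=(n-r)!(H_{n-r}+q)$.

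The only step needing care is checking that the lift is really a permutation extending $F$ and that cycles correspond exactly. In particular, a fixed point of $\tau$ at a super-vertex corresponds to closing that path into a cycle. Loops being available in $K_n^\circ$ is what makes this work for single untouched vertices.
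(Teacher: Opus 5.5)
Your proposal is correct and is essentially the paper's proof: delete the $q$ prescribed cycles, contract each path component to a single atom, note that the resulting $n-r$ objects are permuted freely, and combine $c(\pi)=q+c(\tau)$ with $\sum_{\tau\in S_m}c(\tau)=m!H_m$. One wording fix: $\tau$ is well defined because $\pi(\text{end of }w)$ has no incoming $F$-edge (the end of $w$ has no outgoing $F$-edge, and $\pi$ is injective), so it is the start of an element of $W$. The reason you gave, that every vertex outside the $F$-cycles is a start, is not true, since interior path vertices are not starts.
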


\begin{proof}
Every component of $F$ is a directed path or a directed cycle.  Delete the prescribed directed cycle components, remembering that they already contribute $q$ cycles.  Contract each directed path component to a single atom.  Since a path with $\ell$ edges loses $\ell$ vertices under this contraction, and a cycle with $\ell$ edges is deleted together with its $\ell$ vertices, the number of remaining atoms and unused vertices is $n-r$.

A completion of $F$ is exactly a permutation of these $n-r$ objects.  The path contractions do not change the eventual number of cycles, while the deleted directed cycles contribute $q$ cycles outright.  Summing over all permutations of the remaining $n-r$ objects gives
\[
  \sum_{\pi\supseteq F} c(\pi)
  =(n-r)!H_{n-r}+q(n-r)!
  =(n-r)!\bigl(H_{n-r}+q\bigr).
\]
\end{proof}

\begin{theorem}\label{thm:Xd}
Let \(\sigma\) be uniformly distributed in \(\mathcal C(X_d)\).  Then
\[
\E c(\sigma)-2H_d
=
\frac{2(d-2)(3d^3-14d^2+25d-10)}
{d(d-1)(d^4-6d^3+19d^2-30d+20)}
>0.
\]
In particular, \(\E c(\sigma)>2H_d\) for every \(d\ge 3\).
\end{theorem}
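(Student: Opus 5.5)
The proof is an exact count.  We classify the cycle-factors of \(X_d\) by how they cross the cyclic class structure, count each type together with its total cycle count using Observation~\ref{obs:partial-perm}, and then simplify the resulting rational function of \(d\).

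\textbf{Step 1: crossing pattern.}  Cut the six-cycle of classes into the two blocks \(D_1=B_1\cup C_1\cup A_2\) and \(D_2=B_2\cup C_2\cup A_1\), as in Figure~\ref{fig:Xd}.  Each block is a path of three classes, and the two blocks are joined only by the single-vertex edges \(A_2\leftrightarrow B_2\) and \(A_1\leftrightarrow B_1\).  For a cycle-factor \(\sigma\), let \(f_{XY}\) be the number of edges of \(\sigma\) from class \(X\) to an adjacent class \(Y\).  Flow conservation on each class, with the class path arranged cyclically, shows that the net flow around the six-cycle is a constant \(t\).

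The class sizes bound this constant.  Since \(|A_i|=|B_i|=1\), every edge count across the cut edges \(A_1B_1\), \(A_2B_2\) and their reverses is at most \(1\), and a singleton sends at most one edge out.  Hence \(t\in\{-1,0,1\}\).

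\textbf{Step 2: the \(t=0\) case.}  Here the flows between adjacent classes are balanced, and each singleton's image is either itself or a neighbour that maps back into its block.  I would enumerate the local states of the four singleton vertices.  Each \(A_i\) or \(B_i\) is either fixed, or in a \(2\)-cycle with its singleton neighbour across the cut, or exchanging one edge in each direction with its adjacent \(C\)-class (or with the adjacent singleton inside its block).

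Given these local states, the rest of \(\sigma\) decomposes as follows.  The vertices of \(D_j\) inside the block, together with any inward and outward edges at \(C_j\), form a permutation of a complete looped digraph after contracting the prescribed paths.  The reason is that \(B_1\cup C_1\cup A_2\) is not complete, since \(B_1\) and \(A_2\) are not adjacent.  So I would instead treat each \(C_j\) as the ``free'' part.  The singletons' choices prescribe a partial permutation on \(C_j\cup\{\text{attached singletons}\}\), and Observation~\ref{obs:partial-perm} gives both the count \((m-r)!\) and the cycle sum \((m-r)!(H_{m-r}+q)\).

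\textbf{Step 3: the \(t=\pm1\) cases.}  These work the same way.  A single unit of flow travels all the way around the six classes, passing through each \(C_i\) as a prescribed path segment.  Contracting again lets Observation~\ref{obs:partial-perm} apply within each \(C_i\), and the global cycle through all six classes contributes its extra cycle through the \(q\) term.

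\textbf{Step 4: assemble and simplify.}  Summing over all cases gives \(|\mathcal C(X_d)|\) and \(\sum_\sigma c(\sigma)\) as combinations of \((d-2)!,(d-3)!,\dots\), harmonic numbers \(H_{d-2},H_{d-3}\), and the same terms squared.  Since \(H_d\) equals \(H_{d-2}\) plus rational terms, the harmonic parts should cancel against \(2H_d\), leaving the stated rational function.

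\textbf{Positivity.}  The numerator factor \(3d^3-14d^2+25d-10\) is positive for \(d\ge3\): at \(d=3\) it equals \(20\), and its derivative \(9d^2-28d+25\) has negative discriminant.  The denominator factor \(d^4-6d^3+19d^2-30d+20\) equals \((d^2-3d)^2+10(d-1.5)^2-2.5\), which is positive for \(d\ge3\).  With \(d-2>0\), the whole expression is positive.

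\textbf{Main obstacle.}  The hard part is the bookkeeping in Steps 2--3.  One must enumerate all configurations of the four singletons correctly without double counting, and track the number of prescribed edges \(r\) and closed cycles \(q\) in each \(C\)-block.  I expect it is cleanest to condition on the restriction of \(\sigma\) to the four singletons' out- and in-edges, check \(d=3\) against the count from Proposition~\ref{prop:Gn}, where \(\E c=4\), and then do the algebra.
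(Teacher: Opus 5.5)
Your Step~1 flow argument is sound. It matches the paper's classification by which of the four crossing edges $A_1\leftrightarrow B_1$, $A_2\leftrightarrow B_2$ are used; $t=\pm1$ are the two patterns in which both crossing edges point the same way around the six-cycle.

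\textbf{The gap.} The proposal stops exactly where the content of the theorem begins. None of the counts or cycle sums in Steps~2--3 is computed, and Step~4 only asserts that the harmonic terms ``should'' cancel to the stated rational function. Checking $d=3$ against Proposition~\ref{prop:Gn} is a sanity check, not a derivation.

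\textbf{The $t=\pm1$ accounting is off by one as written.} In $C_1$, say, write $c=\sigma(B_1)$ and $c'=\sigma^{-1}(A_2)$. Suppose you contract the external excursion into a prescribed edge $c'\to c$ inside each $C_j$ and complete the two $C_j$ independently. Then each completion closes that edge into a cycle, so the single global cycle is counted once in each block and you must subtract $1$. It is not an ``extra cycle through the $q$ term'': $q=1$ arises only in the subcase $c=c'$, and even then it is the same global cycle. Also, the segment through $C_j$ is not prescribed; it is what you sum over. Done correctly, each block gives $(d-2)!(d-2)$ configurations with mean $H_{d-3}+\frac1{d-2}=H_{d-2}$, hence $2H_{d-2}-1$ overall. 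A minor slip: there is no ``adjacent singleton inside its block'', since $B_1$ and $A_2$ are non-adjacent.

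\textbf{The missing idea.} In Step~2 you discarded the block-level view because $D_j$ is not complete. But $D_j$ is $K_d^\circ$ minus only the two opposite edges $B_1\to A_2$ and $A_2\to B_1$. Inclusion--exclusion over those two edges, together with Observation~\ref{obs:partial-perm}, handles this directly. With no crossing edges it gives $N_0=d!-2(d-1)!+(d-2)!$ and $S_0=d!H_d-2(d-1)!H_{d-1}+(d-2)!(H_{d-2}+1)$; the $+1$ appears because prescribing both missing edges creates a $2$-cycle. The cross-cut $2$-cycle patterns leave free $K_{d-1}^\circ$ or $K_{d-2}^\circ$ blocks. The $t=\pm1$ patterns reduce to permutations of $d-1$ points by identifying $B_1$ and $A_2$ into a vertex $*$ that must not be fixed.

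\textbf{What your route would still require.} Conditioning instead on the neighbours of the four singletons inside $C_j$ forces you to enumerate coincidence patterns, each of which changes $r$ and $q$. Examples are $\sigma(B_1)=\sigma^{-1}(B_1)$, $\sigma(B_1)=\sigma^{-1}(A_2)$, and closed chains $B_1\to c\to A_2\to c'\to B_1$. You would have to carry all of this out and then verify $N=((d-2)!)^2(d^4-6d^3+19d^2-30d+20)$ and $T-2H_dN=2((d-2)!)^2(d-2)(3d^3-14d^2+25d-10)/(d(d-1))$ to have a proof.

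Your positivity argument is correct, and the sum-of-squares bound on the quartic is a fine alternative to noting that it equals $N/((d-2)!)^2$. It only helps once the formula itself has been established.
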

\begin{proof}

Set
\[
  D_1:=B_1\cup C_1\cup A_2,
  \qquad
  D_2:=B_2\cup C_2\cup A_1.
\]
Each $D_i$ has size $d$.  Inside $D_1$, the only missing directed edges are
\[
  \bar u_1:B_1\to A_2,
  \qquad
  \bar v_1:A_2\to B_1,
\]
and inside $D_2$, the only missing directed edges are
\[
  \bar u_2:B_2\to A_1,
  \qquad
  \bar v_2:A_1\to B_2.
\]
The four directed edges crossing between \(D_1\) and \(D_2\) are
\[
    u_1:B_1\to A_1,\qquad
    v_1:A_2\to B_2,\qquad
    u_2:B_2\to A_2,\qquad
    v_2:A_1\to B_1 .
\]
We classify a cycle-factor by the subset of these four crossing edges
that it uses.  Since a cycle-factor is a permutation of the vertex set,
the number of chosen edges from \(D_1\) to \(D_2\) must equal the number
of chosen edges from \(D_2\) to \(D_1\).  Equivalently, the number of
chosen edges among \(u_1,v_1\) equals the number of chosen edges among
\(u_2,v_2\).  Thus the possible crossing patterns are
\[
    \varnothing,\quad
    u_1u_2,\quad v_1v_2,\quad
    u_1v_2,\quad v_1u_2,\quad
    u_1v_1u_2v_2 .
\]
Table~\ref{tab:patterns} lists the contribution of each crossing type.
The last column gives the mean number of cycles among cycle-factors of
the indicated type which we now prove.
\begin{table}[t]
\centering
\caption{Cycle-factors of the graph \(X_d\) classified by crossing pattern.}
\label{tab:patterns}
\begin{tabular}{@{}lll@{}}
\toprule
Crossing pattern(s) & Number of cycle-factors & Mean cycle count \\
\midrule
\(\varnothing\)
    & \(N_0^2\)
    & \(2S_0/N_0\) \\[2mm]
\(u_1v_2,\ v_1u_2\)
    & \(2((d-1)!)^2\)
    & \(2H_{d-1}+1\) \\[2mm]
\(u_1v_1u_2v_2\)
    & \(((d-2)!)^2\)
    & \(2H_{d-2}+2\) \\[2mm]
\(u_1u_2,\ v_1v_2\)
    & \(2\bigl((d-2)!(d-2)\bigr)^2\)
    & \(2H_{d-2}-1\) \\
\bottomrule
\end{tabular}
\end{table}

\paragraph{Row 1.} First consider a single block $D_i$ when no crossing edge is used.  Let $N_0$ be the number of permutations of $D_i$ avoiding its two missing opposite edges, and let $S_0$ be the sum of their cycle counts.  By inclusion-exclusion and Observation \ref{lem:partial-permutation},
\begin{align*}
  N_0
  &=d!-2(d-1)!+(d-2)!=(d-2)!(d^2-3d+3),\\
  S_0
  &=d!H_d-2(d-1)!H_{d-1}+(d-2)!\bigl(H_{d-2}+1\bigr).
\end{align*}
The final term in $S_0$ appears because prescribing both missing opposite edges creates a prescribed $2$-cycle. The first row now follows from the definitions of $N_0$ and $S_0$, independently in the two blocks. 

\paragraph{Row 2.} For the boundary $2$-cycle pattern $u_1v_2$, the two crossing edges prescribe the cycle $B_1\leftrightarrow A_1$; after deleting the used row and column in each block, both blocks become free complete looped digraphs on $d-1$ vertices.  Thus this pattern has $((d-1)!)^2$ completions and mean cycle count $2H_{d-1}+1$.  The same argument applies to $v_1u_2$.

\paragraph{Row 3.} For the four-edge pattern $u_1v_1u_2v_2$, the crossing edges prescribe the two boundary $2$-cycles $A_1\leftrightarrow B_1$ and $A_2\leftrightarrow B_2$.  What remains is a free permutation problem on $d-2$ vertices in each block, giving $((d-2)!)^2$ completions and mean cycle count $2H_{d-2}+2$.

\paragraph{Row 4.}
It remains to discuss the path-splicing pattern \(u_1u_2\); the pattern
\(v_1v_2\) is symmetric.  Consider first the block \(D_1\).  The edge
\(u_1\) uses the row of \(B_1\) externally, while the edge \(u_2\) uses
the column of \(A_2\) externally.  Therefore the internal edges chosen
inside \(D_1\) form some directed cycles together with one directed path
whose initial vertex is \(A_2\) and whose terminal vertex is \(B_1\).

Identify \(A_2\) and \(B_1\) to a single distinguished vertex \(*\).
After this identification, the internal configuration becomes a
permutation of \(d-1\) vertices.  The distinguished vertex \(*\) is not
fixed: a fixed point at \(*\) would correspond, before identification,
to the missing directed edge \(A_2\to B_1\).  Conversely, every
permutation of these \(d-1\) vertices in which \(*\) is not fixed opens
uniquely into such an internal configuration in \(D_1\), by cutting the
cycle containing \(*\) at \(*\).

Hence the number of possible internal configurations in \(D_1\) is
\[
        (d-1)!-(d-2)!=(d-2)!(d-2).
\]
Moreover, by Observation~\ref{obs:partial-perm} and inclusion-exclusion,
the mean number of cycles in the contracted permutation, conditioned on
\(*\) not being fixed, is
\[
\frac{(d-1)!H_{d-1}-(d-2)!(H_{d-2}+1)}
     {(d-1)!-(d-2)!}
= H_{d-2}.
\]
Opening the cycle containing \(*\) into a path removes exactly one cycle.
Thus the internal configuration in one block has mean cycle count
\(H_{d-2}-1\).  The same argument applies independently in \(D_2\).
Finally, the two crossing edges \(u_1\) and \(u_2\) splice the two open
paths into one directed cycle.  Therefore the mean cycle count for the
pattern \(u_1u_2\) is
\[
        (H_{d-2}-1)+(H_{d-2}-1)+1
        = 2H_{d-2}-1,
\]
and the number of completions is
\[
        \bigl((d-2)!(d-2)\bigr)^2 .
\]

\paragraph{Computing \(\E c(\sigma)\).}
Let \(N\) be the total number of cycle-factors of \(X_d\), and let
\[
        T:=\sum_{\sigma\in\mathcal C(X_d)} c(\sigma)
\]
be the total cycle-sum.  Summing the corresponding entries of
Table~\ref{tab:patterns} gives
\[
\begin{aligned}
N
&= N_0^2
 +2\bigl((d-2)!(d-2)\bigr)^2
 +2((d-1)!)^2
 +((d-2)!)^2,\\
T
&= 2S_0N_0
 +2(2H_{d-2}-1)\bigl((d-2)!(d-2)\bigr)^2 \\
&\qquad
 +2(2H_{d-1}+1)((d-1)!)^2
 +(2H_{d-2}+2)((d-2)!)^2 .
\end{aligned}
\]
Using
\[
        H_{d-1}=H_d-\frac1d,
        \qquad
        H_{d-2}=H_d-\frac1d-\frac1{d-1},
\]
a direct simplification gives
\[
        N=((d-2)!)^2
        (d^4-6d^3+19d^2-30d+20)
\]
and
\[
        T-2H_dN
        =
        \frac{2((d-2)!)^2(d-2)(3d^3-14d^2+25d-10)}
             {d(d-1)} .
\]
Since \(\E c(\sigma)=T/N\), this proves the displayed formula for
\(\E c(\sigma)-2H_d\).

It remains only to check positivity.  The denominator is positive because
\[
        d^4-6d^3+19d^2-30d+20 = N/((d-2)!)^2>0
\]
and \(d(d-1)>0\).  The polynomial
\[
        f(d):=3d^3-14d^2+25d-10
\]
satisfies \(f(3)=20>0\), while
\[
        f'(d)=9d^2-28d+25>0
\]
for every \(d\ge 3\).  Hence \(f(d)>0\) for all \(d\ge 3\), and therefore
\(\E c(\sigma)>2H_d\).
\end{proof}

For \(d=3\), the construction \(X_3\) is exactly the looped bidirected
\(6\)-cycle \(G_6^\circ\) from Section~\ref{sec:six}, and
Theorem~\ref{thm:Xd} gives \(\E c(\sigma)=4\).

\begin{corollary}\label{cor:all-k}
For every \(d\ge 3\) and every integer \(k\ge 2\), there is a directed
\(d\)-regular graph \(G_{k,d}\) on \(kd\) vertices such that, for a
uniformly random \(\sigma\in\mathcal C(G_{k,d})\),
\[
        \E c(\sigma)>kH_d.
\]
Consequently, Conjecture~\ref{conj:CDGHMM} fails for every \(d\ge 3\)
and every multiple \(n=kd\ge 2d\).
\end{corollary}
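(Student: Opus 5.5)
The plan is to take disjoint unions of $X_d$ with copies of $K_d^\circ$, using the fact that the expected cycle count is additive over connected components. For $k\ge 2$ set
\[
  G_{k,d}:=X_d\sqcup \bigsqcup_{i=1}^{k-2} K_d^\circ .
\]
This graph is directed $d$-regular, since each component is. It has $2d+(k-2)d=kd$ vertices, because $X_d$ has $1+1+(d-2)+1+1+(d-2)=2d$ vertices.

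The key step is the independence across components. A cycle-factor of a disjoint union is exactly a tuple of cycle-factors, one for each component, so $\mathcal C(G_{k,d})$ is the Cartesian product of the $\mathcal C$ of the components. The uniform measure on this product is therefore the product of the uniform measures on the factors. The cycle count of the combined permutation is the sum of the cycle counts on the components, so linearity of expectation gives
\[
  \E c(\sigma)=\E_{X_d} c+(k-2)H_d .
\]
Here we use that a uniformly random cycle-factor of $K_d^\circ$ is a uniformly random permutation of $d$ points, whose expected cycle count is $H_d$.

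By Theorem~\ref{thm:Xd}, $\E_{X_d}c>2H_d$, and hence $\E c(\sigma)>2H_d+(k-2)H_d=kH_d$. This beats the conjectured maximum $(n/d)H_d=kH_d$, attained by $\bigsqcup_{i=1}^{k} K_d^\circ$. So Conjecture~\ref{conj:CDGHMM} fails for every $d\ge 3$ and every $n=kd\ge 2d$, which also yields Theorem~\ref{thm:main}.

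There is no real obstacle here. The only point needing care is justifying that the uniform measure on $\mathcal C(G_{k,d})$ factorises over components, and this follows immediately from the product structure of $\mathcal C(G_{k,d})$.
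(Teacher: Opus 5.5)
Your proposal is correct and matches the paper's proof essentially verbatim. It uses the same graph $G_{k,d}=X_d\sqcup (k-2)K_d^\circ$, the product decomposition $\mathcal C(G_{k,d})=\mathcal C(X_d)\times\mathcal C(K_d^\circ)^{k-2}$, linearity of expectation with $H_d$ per clique component, and Theorem~\ref{thm:Xd} to get the strict excess over $kH_d$.
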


\begin{proof}
Let
\[
        G_{k,d}:=
        X_d \sqcup
        \underbrace{K_d^\circ\sqcup\cdots\sqcup K_d^\circ}_{k-2
        \text{ copies}} .
\]
This graph is directed \(d\)-regular and has \(kd\) vertices.
Cycle-factors split over connected components, so
\[
        \mathcal C(G_{k,d})
        =
        \mathcal C(X_d)\times \mathcal C(K_d^\circ)^{\,k-2}.
\]
If
\[
        \sigma=(\sigma_0,\pi_3,\ldots,\pi_k)
\]
is uniformly distributed in this product, then
\[
        c(\sigma)=c(\sigma_0)+\sum_{j=3}^k c(\pi_j).
\]
By linearity of expectation and Theorem~\ref{thm:Xd},
\[
        \E c(\sigma)
        =
        \E c(\sigma_0)+(k-2)H_d
        >
        2H_d+(k-2)H_d
        =
        kH_d.
\]
The graph proposed as extremal in Conjecture~\ref{conj:CDGHMM} is the
disjoint union of \(k\) copies of \(K_d^\circ\), whose expected cycle
count is exactly \(kH_d\).
\end{proof}

\section{Conclusion and open problems}

We disprove Conjecture~\ref{conj:CDGHMM} for every \(d\ge 3\) and every
multiple \(n=kd\ge 2d\).  In degree \(2\), the conjecture is sharp: the
unique extremiser is the disjoint union of \(n/2\) copies of
\(K_2^\circ\).

The main remaining problem is to determine the true extremal value
\[
        M_d(n):=
        \max_G \; \mathbb E_{\sigma\in\mathcal C(G)} c(\sigma),
\]
where the maximum is over all directed \(d\)-regular graphs \(G\) on
\(n\) vertices, and to describe the extremal graphs.

Even the order of the excess over the clique construction is unclear.  By
\Cref{thm:Xd},
\[
        \mathbb E_{\sigma\in\mathcal C(X_d)} c(\sigma)-2H_d
        =
        \frac{6+o(1)}{d^2}
        \qquad \text{as } d\to\infty .
\]
Taking disjoint copies of \(X_d\), and one additional copy of
\(K_d^\circ\) when \(k\) is odd, gives
\[
        M_d(kd)
        \ge
        kH_d+\Omega\!\left(\frac{k}{d^2}\right).
\]
We conjecture that this additive excess is lower order in the natural
asymptotic regime: as \(d\to\infty\),
\[
        M_d(kd)=kH_d(1+o(1)),
\]
uniformly for \(k\ge 2\).  Equivalently, the clique construction should
have the correct leading constant asymptotically, even though it is not
exactly extremal.

It would also be interesting to understand stability.  If a directed
\(d\)-regular graph has expected cycle count close to \(M_d(n)\), must it
be built mostly from some finite family of local structures related to
\(K_d^\circ\) (like \(X_d\))?

Finally, the counterexamples show that the analogy with clique
extremisers in path-partition problems is more subtle than
Conjecture~\ref{conj:CDGHMM} suggested.  It remains open whether the
mechanism in our construction is relevant to extremal questions for path
partitions or short tours, including the conjecture of Magnant and
Martin, or whether those problems are governed by different obstructions.

\bibliographystyle{plainurl}
\bibliography{cycle_factors_refs}

\appendix

\section{Undirected Variants}\label{undirected}
We note that even the undirected variant of Conjecture \ref{conj:CDGHMM} is also not true.
\begin{remark}
In the undirected loopless variant in which a single edge is allowed to count
as a cycle of length two (as in~\cite{CDGHMM}), the analogous clique extremiser
already fails in degree two.  On six vertices, the cycle $C_6$ has exactly
three cycle-factors: the whole $6$-cycle and the two alternating perfect
matchings.  These have respectively $1,3,3$ cycles, and hence the expected
cycle count is
\[
        \frac{1+3+3}{3}=\frac73.
\]
By contrast, every cycle-factor of $2K_3$ consists of the two triangles, so
its expected cycle count is $2$.
\end{remark}
More generally, the switching idea behind our construction has an undirected analogue: one can take three clique blocks, delete one edge in each block, and reconnect the exposed endpoints cyclically.  This gives an obstruction for arbitrarily large degrees.

\begin{remark}
Even under the more restrictive convention that single edges are not allowed
as cycles of length two, the undirected clique construction is not unique at
the putative extremal value. 
Among simple loopless $4$-regular graphs, the
graph $6K_5$ (disjoint union of six $K_5$) has the same expected cycle count for a uniformly random
$2$-factor as the non-clique graph $5K_{2,2,2}$ (disjoint union of five $K_{2,2,2}$).  Indeed, every $2$-factor of
$K_5$ is Hamiltonian, whereas $K_{2,2,2}$ has sixteen Hamiltonian $2$-factors
and four $2$-factors consisting of two triangles, so the expected cycle count is
\[
        \frac{16\cdot 1+4\cdot 2}{20}=\frac65.
\]
Thus both $6K_5$ and $5K_{2,2,2}$ have expected cycle count $6$.
\end{remark}
\newpage
\section{Methodology and AI-assisted discovery}\label{appendixAI}

\paragraph{AI-assisted automated search.}
We used GPT-5.4/5.5 to assist with an automated search over small graphs. The first step was to build an exact verifier. Given a candidate regular graph, the verifier enumerated, or otherwise exactly counted, its cycle-factors, computed the expected number of cycles in a uniformly random cycle-factor, and compared this value with the corresponding clique construction. In particular, the verifier returned the excess over the conjectured value. This allowed candidates to be ranked either by how close they were to being counterexamples, or, when the excess was positive, by the size of the counterexample margin.

We then used PatternBoost~\cite{CEWW24} as a search heuristic. A large number of candidate graphs were generated, and the verifier was used to discard poor candidates and retain promising ones. These retained examples were used as conditioning data for further model-guided generation. The model outputs were again passed through the verifier, bad examples were pruned, and the surviving examples were modified locally. This generate--verify--prune--modify loop was repeated several times. It produced the initial counterexamples in degrees \(3\) and \(4\), which then guided the general construction.

\paragraph{LLM-assisted generalization.}
The computational search was not, by itself, sufficient for the main theorem. After the small examples were found, we looked for structural features that could persist in higher degree. The relevant pattern was that the small examples could be interpreted as clique-like blocks with a small number of deleted internal edges and a small number of compensating edges between blocks. This suggested several possible higher-degree generalisations. These candidate generalisations were again tested by the verifier.

The construction \(X_d\) in Section \ref{sec:construction} emerged from this process. It replaces the six vertices of the looped bidirected \(6\)-cycle by six classes of sizes
\[
1,1,d-2,1,1,d-2
\]
arranged cyclically, with complete looped digraphs inside classes and complete bidirected connections between consecutive classes.

\paragraph{AI-assisted verification.}
The verification of the successful generalisation was also AI-assisted. GPT-5.4/5.5 Pro was used to help organize and check the case analysis for the cycle-factors of \(X_d\). The initial verification by the LLM was substantially more complicated than the proof presented in the paper. After the construction had been identified and computationally tested, we supplied additional guidance about how to structure the analysis: namely, to classify cycle-factors by their crossing pattern between the two \(d\)-vertex blocks \(D_1\) and \(D_2\), and to express the contribution of each crossing pattern using prescribed partial permutations. This led to the shorter proof in Section \ref{sec:construction}, in which the calculation reduces to the four cases displayed in Table \ref{tab:patterns}.

The computational and AI-assisted steps were used to discover, test, and organize the constructions. All final mathematical statements used in the paper are contained in the main text and are verified explicitly.

\end{document}